\renewcommand\le{\leqslant}
\renewcommand\ge{\geqslant}
\newtheorem{theorem}{Theorem}
\newtheorem{lemma}{Lemma}
\newtheorem*{theoremA}{Theorem A}
\theoremstyle{remark}
\newtheorem*{remark}{Remark}
\newcommand\R{\mathbb R}
\newcommand\E{\mathsf E}
\renewcommand\P{\mathsf P}
\newcommand\Cc{\mathcal C}
\newcommand\pOmega{\mathsf{\Omega}}
\newcommand{\lqw}{\ell_{q,w}^N}
\newcommand{\lw}[1]{\ell_{{#1},w}^N}
\DeclareMathOperator{\extr}{extr}
\DeclareMathOperator{\supp}{supp}
\title{Kolmogorov widths of Besov classes $B^1_{1,\theta}$ and products of octahedra}
\author{Yuri Malykhin}
\begin{document}
\maketitle
\abstract{In this paper we find the orders of decay for Kolmogorov widths of
some Besov classes related to $W^1_1$ (the behaviour of the widths for $W^1_1$
remains unknown):
$$
d_n(B^1_{1,\theta}[0,1],L_q[0,1])\asymp n^{-1/2}\log^{\max(\frac12,1-\frac{1}{\theta})}n,\quad
2<q<\infty.
$$
The proof relies on the lower bound for widths of product of octahedra in a
special norm (maximum of two weighted $\ell_q$ norms).
This bound generalizes the theorem of B.S.~Kashin on widths of octahedra
in $\ell_q^N$.
}

\section{Introduction}

Let us recall the definition of widths.
Let $X$ be a normed space and $K\subset X$. The Kolmogorov $n$-width of $K$ in
$X$ is the following quantity
$$
d_n(K,X) := \inf_{\substack{Q_n\subset X\\\dim Q_n\le n}} \sup_{x\in
K}\inf_{y\in Q} \|x-y\|_X,
$$
where $\inf_{Q_n}$ is taken over all linear subspaces in $X$ having dimension at
most $n$.

The main problem of the widths theory is to find the order of decay of
$d_n(W,X)$ at $n\to\infty$ for functional classes and finite-dimensional sets. 
One of interesting cases is the family of Sobolev spaces $\mathcal W^r_p$ consisting of
functions $f$ with a finite norm $\|f\|_p + \|f^{(r)}\|_p$ and Sobolev classes
$W^r_p$ (unit balls in corresponding spaces).
The order of $d_n(W^r_p[0,1],L_q[0,1])$ is known for all
$r\in\mathbb N$ and $1\le p,q\le\infty$ (V.M.~Tikhomirov, B.S.~Kashin,
E.D.~Gluskin, R.S.~Ismagilov~--- see~\cite{LGM}, Chapters 13, 14, and
the survey~\cite{TDU}, \S4.3), except the case $r=1$, $p=1$,
$q\in(2,\infty)$. There is a logarithmic gap~\cite{Kul,KMR}:
$$
    c(q)n^{-1/2}\log^{1/2}n \le d_n(W^1_1[0,1],L_q[0,1]) \le C(q)n^{-1/2}\log n.
$$
The order of approximation is known only for special approximation subspaces
$Q_n=\mathrm{span}\{\exp(im_jx)\}_{j=1}^n$, see.~\cite{Belinsky1987}:
$d_n^T(W^1_1[0,1],L_q[0,1]) \asymp n^{-1/2}\log n$.

Sobolev spaces are related to Besov spaces. Let us give the definition,
following~\cite{DL}, Chapter 2, \S10. 
Given parameters $r>0$, $p,\theta\in[1,\infty]$ and a (finite or infinite)
segment $I\subset\mathbb R$, we define the Besov space
$\mathcal B^r_{p,\theta}(I)$ as the space of functions
 $f\in L_p(I)$ having a finite semi-norm (with $s=1+\lfloor
r\rfloor$)
$$
    |f|_{B^r_{p,\theta}(I)} :=
    \begin{cases}
        \int_0^\infty (t^{-r}\omega_s(f,t)_p^\theta)^{1/\theta} \frac{dt}{t},&\quad 1\le \theta < \infty,\\
        \sup_{t>0}t^{-r}\omega_s(f,t)_p,&\quad \theta=\infty.
    \end{cases}
$$
Here $\omega_s(f,t)_p := \sup_{0<h\le
t}\|\Delta_h^s(f,\cdot)\|_{L_p(I\cap(I-sh))}$ is the usual modulus of
smoothness. The norm in the Besov space is defined as
$\|f\|_{L_p(I)}+|f|_{B^r_{p,\theta}(I)}$. 
The Besov class $B^r_{p,\theta}(I)$ is the unit ball in the corresponding space.

\begin{remark}
    One can define Besov space on domains $I\ne\R$ as the restriction of
    functions $f\in B^r_{p,\theta}(\mathbb R)$ to $I$. Then the norm of $g$
    is defined as
$\inf\{\|f\|\colon \left.f\right|_I=g\}$. For segments $I\subset\R$ these
    definitions (and corresponding norms) are equivalent, see, e.g.,~\cite{Triebel3}, \S1.11, Theorem 1.118.
\end{remark}


There are embeddings $\mathcal B^r_{p,\min(p,2)}
\hookrightarrow \mathcal W^r_p \hookrightarrow \mathcal B^r_{p,\max(p,2)}$ for
$1<p<\infty$ and $\mathcal B^r_{p,1} \hookrightarrow \mathcal W^r_p
\hookrightarrow \mathcal B^r_{p,\infty}$ for $p=1$, $p=\infty$. As a corollary,
we have $c_1 B^1_{1,1}[0,1] \subset W^1_1[0,1] \subset
c_2 B^1_{1,\infty}[0,1]$. This gives us motivation to study widths of
$B^1_{1,\theta}$.

Of course, widths of Besov classes are interesting by themselves.
Let us mention the papers of Ding Dung~\cite{Dung}, A.S.~Romanyuk~\cite{Romanyuk},
E.M.~Galeev~\cite{Galeev2001}; see also the survey~\cite{TDU}, \S4.3.
The case $B^1_{1,\theta}$ in $L_q$, $q>2$, as far as author knows, was not
studied before.

\begin{theorem}\label{th_besov}
    For any $q\in(2,\infty)$ there exist $c_{q}>0$ and $C_{q}>0$, such that
    for all $1\le\theta\le\infty$ and $n=2,3,\ldots$ the inequalities hold:
    $$
    c_qn^{-1/2}\log^{1/2}n \le d_n(B^1_{1,\theta}[0,1],L_q[0,1])\le
    C_qn^{-1/2}\log^{1/2}n,\quad 1\le\theta\le 2,
    $$
    $$
    c_qn^{-1/2}\log^{1-1/\theta}n \le d_n(B^1_{1,\theta}[0,1],L_q[0,1])\le
    C_qn^{-1/2}\log^{1-1/\theta} n,\quad 2\le\theta\le\infty.
    $$
\end{theorem}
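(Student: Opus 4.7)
The plan is to reduce Theorem~\ref{th_besov} to a finite-dimensional problem about Kolmogorov widths of products of octahedra, and then invoke the generalization of Kashin's theorem announced in the abstract. Using a dyadic spline or wavelet basis, one decomposes $f = \sum_{k\ge 0} f_k$ with $f_k$ in the block of dimension $N_k \asymp 2^k$. A standard Besov characterization gives
$$
|f|_{B^1_{1,\theta}} \asymp \Bigl(\sum_k (2^k\|f_k\|_1)^\theta\Bigr)^{1/\theta},\qquad \|f_k\|_p \asymp 2^{-k/p}\|c^{(k)}\|_p,
$$
so the unit ball of $B^1_{1,\theta}$ is equivalent (up to constants) to the set of coefficient sequences with $\sum_k \|c^{(k)}\|_1^\theta \le 1$, i.e.\ a product of octahedra with $\ell_\theta$-coupling of the block radii. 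For $q\in(2,\infty)$, Littlewood--Paley-type inequalities convert the $L_q$-norm of $f$ into a weighted mixed $\ell_q$-norm on the coefficients $c^{(k)}$, with $k$-th block weighted by $2^{-k/q}$; thus the original width reduces to $d_n(W,X)$, where $W$ is the above product body and $X$ carries the weighted mixed norm.

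For the upper bound I would approximate each block $f_k$ independently, using the classical Kashin--Gluskin estimate $d_m(B_1^N,\ell_q^N) \lesssim m^{-1/2}\bigl(\log(eN/m)\bigr)^{1/2}$, and then distribute a budget of $n$ dimensions across the active levels $k\le K \asymp \log n$. The optimal distribution differs in the two regimes: for $1\le\theta\le 2$ one concentrates the budget on a single scale near Kashin's threshold $2^k\asymp n\log n$ and collects a $\log^{1/2}n$ factor from a single octahedron; for $2\le\theta\le\infty$ one must treat all $\asymp\log n$ scales simultaneously, and the $\ell_\theta$-coupling of radii forces each to be $\asymp(\log n)^{-1/\theta}$, producing the factor $\log^{1-1/\theta}n$.

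The lower bound is the main obstacle. For $\theta\le 2$ it is essentially Kashin's classical inequality $d_n(B_1^N,\ell_q^N) \gtrsim n^{-1/2}\log^{1/2}(eN/n)$ applied to a scaled octahedron $rB_1^N$ embedded into $B^1_{1,\theta}$ via a single wavelet level with $N\asymp n\log n$. For $\theta>2$ no single scale is sharp, and one must embed a genuine product $r_1B_1^{N_1}\times\cdots\times r_sB_1^{N_s}$ into $B^1_{1,\theta}$ across several wavelet levels simultaneously, the radii $r_k$ being chosen to saturate the $\ell_\theta$ Besov constraint across scales. What is then needed is a lower estimate for the $n$-width of such a product in the weighted mixed $\ell_q$-norm, and in fact in a norm defined as the maximum of two weighted $\ell_q$-norms, so that both endpoint regimes can be handled by one inequality. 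Establishing this bound is precisely the generalization of Kashin's octahedron theorem referred to in the abstract; once it is in hand, the discretization converts it back into the desired lower estimate on $d_n(B^1_{1,\theta}[0,1],L_q[0,1])$.
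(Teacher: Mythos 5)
Your overall architecture (wavelet discretization, reduction to a weighted finite-dimensional problem, invoking the generalized octahedron theorem) matches the paper, but two key steps of your plan would fail as described. First, your lower bound for $1\le\theta\le2$ rests on ``Kashin's classical inequality $d_n(B_1^N,\ell_q^N)\gtrsim n^{-1/2}\log^{1/2}(eN/n)$''. No such logarithmic enhancement exists for finite $q\in(2,\infty)$: the sharp two-sided bound is $d_n(B_1^N,\ell_q^N)\asymp\min(N^{1/q}n^{-1/2},1)$ (the paper's inequality~(\ref{octahedra})); the $\log^{1/2}(N/n)$ factor appears only for $q=\infty$. Consequently a single wavelet level $2^k\asymp n\log n$, where the $L_q$ norm carries the weight $2^{-k/q}$ per coordinate, yields only $2^{-k/q}\cdot N^{1/q}n^{-1/2}=n^{-1/2}$ with no logarithm. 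In the paper the $\log^{1/2}n$ is produced differently: one takes a \emph{single} octahedron spread over $\asymp\log n$ dyadic levels $k_0\le k\le k_1$ with weights $w_{k,i}=2^{-k}$ (each level contributing total weight $1$), and applies Theorem~\ref{th_w} with $q_1=2$, so that the factor $(\sum w_i)^{1/2}\asymp(\log n)^{1/2}$ appears from the weighted $\ell_2$ part of the intersection norm $\ell^{-1/q}_{q,q}\cap\ell^{-1/2}_{2,2}$, while the second (weighted $\ell_q$, hence $\ell_\infty$-type) norm supplies the cut-off $h$ that keeps the minimum from degenerating. This is precisely why the generalized theorem with nonconstant weights and the $\max(\cdot,h\|x\|_\infty)$ norm is needed even in the one-block case; your plan never uses the two-norm structure for $\theta\le2$ and therefore cannot reach $\log^{1/2}n$.

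Second, your upper bound for $1\le\theta\le2$ (``concentrate the budget on a single scale near $2^k\asymp n\log n$'') does not close: the levels you leave unapproximated with $n\lesssim 2^k\lesssim n\log n$ each contribute an error of order $2^{-k/q}\|x[k]\|_q$, which can be as large as $2^{-k/q}\approx n^{-1/q}\gg n^{-1/2}\log^{1/2}n$. The correct scheme (for $\theta=2$, from which $\theta<2$ follows by monotonicity of the classes in $\theta$) takes all coordinates in blocks with $2^k<n/4$, distributes $\approx n/(2m)$ dimensions to each of the remaining $m\asymp\log n$ blocks using the extremal subspaces for $d_l(B_1^{2^k},\ell_q^{2^k})\asymp l^{-1/2}2^{k/q}$, and obtains $\|x-y\|^2_{\ell^{-1/q}_{q,2}}\ll(n/m)^{-1}\sum_k\|x[k]\|_1^2\le m/n$; the $\log^{1/2}n$ again arises from summing $\asymp\log n$ equal block contributions in the outer $\ell_2$ sum, not from a single octahedron. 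Your treatment of $\theta\ge2$ (product of $\asymp\log n$ octahedra for the lower bound, and the embedding $b^0_{1,\theta}\subset m^{1/2-1/\theta}b^0_{1,2}$ for the upper bound) is directionally consistent with the paper, but as written the two endpoint regimes $\theta\le2$ both rest on the erroneous single-level/log-enhanced-Kashin picture.
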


Unfortunately, this gives us no new information about $W^1_1$.

The proof is based on the following finite-dimensional result.
Consider a weighted norm in $\R^N$:
$$
\|x\|_{\lqw} := \left\{\sum_{i=1}^N w_i|x_i|^q\right\}^{1/q}.
$$
Suppose that the set of coordinates is divided into $m$ blocks:
$\{1,\ldots,N\}=\sqcup_{s=1}^m \Delta_s$, $|\Delta_s|=N_s$.
Let us take the octahedron $B_1^{N_s}$ in each block, and consider their
Cartesian product
$$
\prod_{s=1}^m B_1^{N_s} = \{x\in \R^N\colon \sum_{i\in\Delta_s} |x_i| \le
1,\;s=1,\ldots,m\}.
$$

\begin{theorem}\label{th_w}
    Let there are weights $w_1,\ldots,w_N\ge 0$, division into $m$ blocks
    $\{1,\ldots,N\}=\sqcup_{s=1}^m \Delta_s$, $N_s=|\Delta_s|$, and a number
    $n\in\mathbb N$, such that the following conditions hold:
    \begin{itemize}
        \item[1)] the weights are sufficiently small: $\max\limits_{1\le i\le N} w_i \le
            (4n)^{-1}\sum_{i=1}^N w_i$;
        \item[2)] the weighted ratio of each block is not small:
            $$
            \nu_s := \frac{\sum_{i\in\Delta_s}w_i}{\sum_{i=1}^N w_i} \ge
            \frac{C\log 2m}{n},\quad s=1,\ldots,m.
            $$
        where $C$ is enough large absolute constant.
    \end{itemize}
    Suppose that for $q\in[2,\infty)$ and $h>0$ the norm in the space
    $X=(\R^N,\|\cdot\|)$ satisfies
    $$
    \|x\|_X \ge \max(\|x\|_{\lw{q}},h\|x\|_\infty),\quad x\in\R^N.
    $$
    Then the Kolmogorov width of the product of octahedra in $X$ satisfies the
    inequality
$$
    d_n(\prod_{s=1}^m B_1^{N_s},X)\ge
    \min(c_q\,(n\max_{1\le s\le m}\nu_s)^{-1/2}(\sum_{i=1}^N w_i)^{1/q}, h/2).
$$
\end{theorem}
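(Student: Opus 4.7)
\noindent\emph{Proof plan.} The plan is to adapt Kashin's probabilistic extremal-point method to the multi-block, weighted setting, combined with a dichotomy exploiting the two pieces of the norm. By monotonicity of $d_n$ in the ambient norm, I may assume $\|x\|_X = \max(\|x\|_{\lqw}, h\|x\|_\infty)$. Fix any $Q_n \subset \R^N$ of dimension at most $n$. The extreme points of $K := \prod_{s=1}^m B_1^{N_s}$ are the vectors $v = \sum_{s=1}^m \eps_s e_{i_s}$ with $i_s \in \Delta_s$ and $\eps_s \in \{\pm 1\}$. Since $\sup_{x \in K}\inf_{y \in Q_n}\|x-y\|_X$ is realised on $\extr(K)$ (supremum of a convex function over a convex set), the goal reduces to producing a single extremal $v$ with $\inf_{y \in Q_n}\|v-y\|_X$ at least the claimed bound.

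First I would argue a dichotomy. If some extremal $v$ satisfies $\inf_{y \in Q_n}\|v-y\|_\infty \ge 1/2$, then $\|\cdot\|_X \ge h\|\cdot\|_\infty$ forces $\inf_{y \in Q_n}\|v-y\|_X \ge h/2$, yielding the second term of the $\min$. Otherwise, every extremal $v$ admits some $y_v \in Q_n$ with $\|v-y_v\|_\infty < 1/2$; in particular $y_v$ reproduces the sign $\eps_s$ at $i_s$ with $|y_{v,i_s}|>1/2$. In this regime I would sample a random extremal $v^* = \sum_s \eps_s e_{i_s}$, where $i_s\in\Delta_s$ is drawn with probability $w_{i_s}/(\nu_s W)$ (with $W := \sum_i w_i$) and $\eps_s$ is an independent uniform sign, then show
\[
    \P\bigl[\inf_{y \in Q_n}\|v^* - y\|_{\lqw} < \tau\bigr] < 1,\qquad \tau = c_q(n\max_s \nu_s)^{-1/2}W^{1/q}.
\]

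This goes via the two standard ingredients of the Kashin method: (i) a small-ball estimate for each fixed $y \in \R^N$ (subject to the sign-alignment constraints produced by the dichotomy), derived block-by-block from the convexity inequality $\tfrac12(|1+t|^q+|1-t|^q)\ge 1+|t|^q$, valid for $q\ge 2$, and then combined via block independence into a Bernstein-type tail bound for $\|v^* - y\|_{\lqw}^q$; and (ii) a $\delta$-net $\mathcal N$ for the relevant portion of $Q_n$, of cardinality controlled by a volumetric/entropy estimate in the $\lqw$ metric. A union bound over $\mathcal N$ closes the argument.

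The main obstacle is balancing (i) and (ii) so that the threshold $\tau$ is exactly $c_q(n\max_s\nu_s)^{-1/2}W^{1/q}$ and not weaker. Conditions 1) and 2) are tuned for this balance: condition 1) is a Kashin-type ``no single weight dominates'' hypothesis that keeps the net small, while condition 2), namely $n\nu_s \gtrsim \log(2m)$, is exactly the budget needed to absorb the $\log(2m)$ cost of ranging over the $m$ blocks in the union bound. A crude reduction to the single block $s_0$ realising $\max_s\nu_s$ would give only $n^{-1/2}(\nu_{s_0}W)^{1/q}$, losing the factor $\nu_{s_0}^{1/2-1/q}$; the block structure must therefore be used nontrivially in the variance/concentration step to recover the exponent $-1/2$ on $\max_s\nu_s$.
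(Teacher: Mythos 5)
Your step (ii) --- the union bound over a $\delta$-net of $Q_n$ --- cannot close, and this is a fatal gap rather than a technicality. The only randomness in your scheme is the random extreme point $v^*$, an atomic distribution whose largest atom has probability $\prod_{s=1}^m\max_{i\in\Delta_s}\frac{w_i}{2\nu_s W}$; hence $\sup_y \P\bigl[\|v^*-y\|_{\lqw}<\tau\bigr]$ is at least that quantity. Already in the admissible case $m=1$, $N=4n$, all weights equal, this is $\ge (8n)^{-1}$. On the other side, any net at scale $\tau\asymp n^{-1/2}W^{1/q}$ of the relevant portion of the $n$-dimensional $Q_n$ (which has $\lqw$-radius at least of order $\|v^*\|_{\lqw}\asymp n^{-1/q}W^{1/q}$) must contain at least $c^{\,n}n^{(1/2-1/q)n}$ points, so $|\mathcal N|\cdot\sup_y\P[\cdots]\gg 1$ by a super-exponential margin; no tuning of constants helps, and a Bernstein bound over the $m\le n/(C\log 2m)$ independent blocks can only ever produce probabilities of size $e^{-cm}\ge e^{-cn}$, which is still far too large. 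In short, a single random extreme point carries far too little entropy to survive a union bound over an $n$-dimensional net; the small-ball-plus-net scheme (a Gluskin-type device) is not the method of \cite{K80}.

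The paper's actual argument, which you would need to substitute for (ii), is an averaging (double-counting) argument with no nets at all. First, assuming $d_n<h/2$, the $h\|\cdot\|_\infty$ part of the norm is used to iteratively correct the approximant so that every extreme point $u$ has $v(u)\in L_n$ with $\|u-v(u)\|_X\le 2d_n$ and \emph{exact} interpolation $v(u)_i=u_i$ on $\supp u$; note that your dichotomy yields only sign alignment $|y_{v,i_s}|>1/2$, which is strictly weaker and would not make the support terms vanish in the computation below. Then one introduces, independently of the random extreme point $\mathsf u$, a random coordinate set $\pOmega$ with $\P(i\in\pOmega)=2nw_i$ and evaluates $K=\E\bigl(\sum_{i\in\pOmega}|\mathsf u_i-\mathsf v_i|^q\,\big|\,\supp\mathsf u\subset\pOmega\bigr)$ in two ways: averaging over $\pOmega$ first gives $K\ll n\,d_n^q$, while averaging over $\mathsf u$ first, restricting to ``regular'' realizations $|\pOmega\cap\Delta_s|\asymp n\nu_s$ and passing from $\ell_q$ to $\ell_2$ via $q\ge2$, reduces the lower bound $K\gg n^{1-q/2}\nu^{-q/2}$ to the Hilbert-space width lemma of Ismagilov (Eckart--Young), plus a correlation inequality to decouple the events. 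Comparing the two bounds on $K$ gives the theorem. Your final paragraph correctly senses that the block structure must enter the concentration step nontrivially, but the mechanism is this trace/averaging comparison, not a small-ball tail versus a net.
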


We note that the condition 2) implies that there are not so many blocks:
$Cm\log(2m)\le n$.

Let us mention some corollaries of the Theorem 2. Here and after by intersection
$X\cap Y$ of two normed spaces we mean the space with the norm
$\|x\|_{X\cap Y} := \max(\|x\|_X,\|x\|_Y)$.
If we take $q_2\ge 1$, then $\|x\|_{\lw{q_2}}\ge \min_i w_i^{1/q_2}\|x\|_\infty$.
Suppose that the sum of weights in each block is constant
(hence $\nu_s=m^{-1}$), then the bound in the theorem becomes (where we put
$q_1:=q$ for more symmetry):
\begin{equation}
    \label{th_w_form}
    d_n(\prod_{s=1}^m B_1^{N_s},\lw{q_1}\cap\lw{q_2})\ge
    \min(c_{q_1}\sqrt{\frac{m}n}(\sum_{i=1}^N w_i)^{1/q_1}, \frac12\min w_i^{1/q_2}).
\end{equation}
We will use the bound in the form~(\ref{th_w_form}) to prove Theorem 1; we shall
take $q_1=2$ and $q_2$ equal to $q$ from Theorem 1.

If $q_1=q_2=q\in[2,\infty)$ in~(\ref{th_w_form}) then we get lower bound
for the width of product of octahedra in $\lqw$. Moreover, if all blocks are equal
and $w_i=1$, then we get
$$
d_n(\prod_{s=1}^m B_1^{N/m},\ell_q^N) \ge \min(c_q\sqrt{\frac{m}n}N^{1/q},\frac12),
\quad\mbox{if $Cm\log(2m)\le n\le N/4$}.
$$
For $m=1$ this was proven in the paper of B.S.~Kashin~\cite{K80}, there was also
noted that the bound is sharp:
\begin{equation}
    \label{octahedra}
    \frac14\min(\frac{N^{1/q}}{\sqrt{n}},1) \le d_n(B_1^N,\ell_q^N) \le
    \min(c_q\frac{N^{1/q}}{\sqrt{n}},1),\quad n<\frac{N}{2},\;q\in(2,\infty).
\end{equation}
Our proof of~\ref{th_w} is the development of a method from~\cite{K80}.

The product of octahedra had already appeared in the context of widths of
functional classes, see, e.g.~\cite{Galeev90}, \cite{MR17}.

\section{Proofs}
We will prove Theorem~\ref{th_w} and then derive Theorem~\ref{th_besov}.
\begin{proof}
W.l.o.g. $\|x\|_X=\max(\|x\|_{\lw{q}},h\|x\|_\infty)$.
We denote $B:=\prod_{s=1}^m B_1^{N_s}$,
$d_n := d_n(B,X)$, and let $L_n$ be the extremal subspace for that width.
If $d_n\ge h/2$, the proof is done, so we will assume that $d_n<h/2$.

We are interested in the approximation of the extreme points of $B$; it is easy to see that
$\extr B = \{\sum_{s=1}^m \pm e_{i_s}\colon i_s\in\Delta_s\}$. Let us show that
for any $u\in\extr B$ there is $v=v(u)\in L_n$ such that
\begin{equation}\label{1etap_a}
    \|u-v\|_X\le 2d_n,
\end{equation}
\begin{equation}\label{1etap_b}
    u_i=v_i\quad\forall\,i\in\supp u.
\end{equation}
Fix $u\in\extr B$ and take $v_0\in L_n$, such that $\|u-v_0\|_X\le d_n$.
Write the error $u-v_0$ as $u_1+\beta_1$, where
$\supp u_1 \subset\supp u$, $\supp\beta_1\cap\supp u=\varnothing$.
    We have $\|\beta_1\|\le d_n$,
    $$
    \|u_1\|_\infty \le h^{-1}\|u_1\|_X \le d_n/h \le 1/2,
    $$
    and the condition on $\supp u_1$ implies that $u_1\in\frac12B$. Hence, the
    vector $u_1$ may be approximated by $v_1\in L_n$ with an error $\le\frac12 d_n$.
Further, we write $u_1-v_1=u_2+\beta_2$, $\supp u_2\subset\supp u$ and approximate
$u_2$. If we repeat this process, we will get the approximation of $u$ by the sum
$v_0+v_1+\ldots\in L_n$ with the error $\|\beta_1+\beta_2+\ldots\|\le 2d_n$.

    Let us fix a mapping $v(\cdot)\colon \extr B\to L_n$,
    which satisfies~(\ref{1etap_a}) and~(\ref{1etap_b}).
    
    We can normalize the weights by $\sum_{i=1}^N w_i=1$.
Let $p_i:=2nw_i$. Let $\pOmega\subset\{1,\ldots,N\}$ be a random subset, such
    that any point $i$ gets in $\pOmega$ independently of others with
    probability $p_i$. Condition 1) of the Theorem gives us
$p_i\in[0,1]$ (even $p_i\le 1/2$; we will use that later). The choice of
    the probabilites proportional to the weights allows us to write weighted the norm as the
    expectation:
    $$
    \E\sum_{i\in\pOmega}|x_i|^{q} = 2n\|x\|_{\lw{q}}^{q}.
    $$
    Also, $\E|\pOmega|=\sum p_i=2n$ and
    $\E|\pOmega\cap\Delta_s|=2n\nu_s$.

    Let $\mathsf{u}$ be a random point in $\extr B$ (equiprobable) independent
    of $\pOmega$. In other words, in each block $\Delta_s$ one
    random coordinate of $\mathsf{u}$ equals $\pm1$ and others are zero.
    Put $\mathsf{v}=v(\mathsf{u})$.
    Consider the random variable
$$
    \xi := \sum_{i\in\pOmega}
    |\mathsf{u}_i-\mathsf{v}_i|^{q}\cdot\mathbf{1}\{\supp \mathsf{u}\subset
    \pOmega\}
$$
and the conditional expectation
$$
    K := \E(\sum_{i\in\pOmega} |\mathsf{u}_i-\mathsf{v}_i|^{q}\,|\, \supp
    \mathsf{u}\subset \pOmega) =
    \frac{\E\xi}{\P(\supp \mathsf{u}\subset\pOmega)}.
$$
The meaning of the condition
$\{\supp\mathsf{u}\subset\pOmega\}$ is that it allows us to come to a
low-dimensional case: $|\pOmega|\approx\E|\pOmega| = 2n$.

We will bound $K$ in two ways.
    Fix $u^\circ$ and average over $\pOmega$:
$$
\E(\xi|\mathsf{u}=u^\circ) = \sum_{i=1}^N|u^\circ_i-v^\circ_i|^{q}\cdot
\P(\supp u^\circ\subset \pOmega,\;i\in \pOmega).
$$
If $i\in\supp u^\circ$, then by~(\ref{1etap_b}) the corresponding term is zero.
Otherwise, by the independence,
$$
    \frac{\P(\supp u^\circ\subset\pOmega,\;i\in\pOmega)}{\P(\supp
    u^\circ\subset\pOmega)}= P(i\in\pOmega)=2nw_i.
$$
We sum over $i$ and obtain
$$
\frac{\E(\xi|\mathsf{u}=u^\circ)}{\P(\supp u^\circ\subset\pOmega)} =
2n\sum_{i=1}^Nw_i|u^\circ_i-v^\circ_i|^{q} =
    2n\|u^\circ-v^\circ\|_{\lw{q}}^{q} \ll n d_n^{q}.
$$
Then we multiply by the probability, average over $u^\circ$ and obtain
$$
\E\xi \ll n d_n^{q}\cdot\P(\supp\mathsf{u}\subset\pOmega),\quad K \ll n d_n^{q}.
$$

Now we fix $\Omega^\circ$ and average over $u$: 
$$
    \E(\xi|\pOmega=\Omega^\circ) =
    \E\left(\left.\sum_{i\in\Omega^\circ}|\mathsf{u}_i-\mathsf{v}_i|^{q}\;\right|\supp
    \mathsf{u}\subset\Omega^\circ\right)\cdot\P(\supp \mathsf{u}\subset\Omega^\circ).
$$

Let us call a set $\Omega$ \textit{regular} if the cardinalities of the sets
$\Omega_s := \Omega\cap\Delta_s$ satisfy 
    $\frac32 n\nu_s\le |\Omega_s|\le An\nu_s$ for all $s$; the constant $A$
    is large enough (see later).
    
    Suppose that $\Omega^\circ$ is regular. Obviously, $\frac32n \le
    |\Omega^\circ|\le An$.
    By $\widetilde{\mathsf{u}}$ we denote the random vector with the conditional
    distribution
    $\mathsf{Law}(\widetilde{\mathsf{u}})=\mathsf{Law}(\mathsf{u}|\supp
    \mathsf{u}\subset\Omega^\circ)$. Let
    $\widetilde{\mathsf{v}} =
    v(\widetilde{\mathsf{u}})$. We have
    $$
\E\left(\left.\sum_{i\in\Omega^\circ}|\mathsf{u}_i-\mathsf{v}_i|^{q}\;\right|\supp
    \mathsf{u}\subset\Omega^\circ\right) =
    \E\|\widetilde{\mathsf{u}}-\widetilde{\mathsf{v}}\|_{\ell_{q}(\Omega^\circ)}^{q}.
    $$
    We can consider $\widetilde{\mathsf{u}}$ as a random vector in
    $\R^{\Omega^\circ}$.
    It is easy to see that $\widetilde{\mathsf{u}}$ takes values (equiprobable)
in the set $\extr\prod_{s=1}^m
B_1^{|\Omega_s^\circ|}$; in other words, in each set $\Omega^\circ_s$ one random
coordinate is $\pm1$ and others are zero.
We can pass from $q$-mean to $2$-mean and from
$\ell_{q}(\Omega^\circ)$ metric to $\ell_2(\Omega^\circ)$
(here we use that $q\ge 2$):
$$
    (\E \|\widetilde{\mathsf{u}}-\widetilde{\mathsf{v}}\|_{q}^{q})^{1/{q}} \ge
    (\E \|\widetilde{\mathsf{u}}-\widetilde{\mathsf{v}}\|_{q}^{2})^{1/{2}} \gg
    n^{1/{q}-1/2}(\E \|\widetilde{\mathsf{u}}-\widetilde{\mathsf{v}}\|_2^2
    )^{1/2}.
$$
The estimate for the approximation in Euclid metric is standard (a variation of
Eckart-Young theorem).

\begin{lemma}[See, e.g.,~\cite{Ismagilov}]
Let $\mathsf{z}$ be a random vector in Hilbert space $H$ with
$\E\|z\|_H^2<\infty$ and in some orthonormal basis
$\{\varphi_i\}$ we have $\mathsf{E}\langle \mathsf{z},\varphi_i\rangle\langle
\mathsf{z},\varphi_j\rangle = 0$ for $i\ne j$.
Let $\sigma_i := (\mathsf{E}\langle \mathsf{z},\varphi_i\rangle^2)^{1/2}$ and
    $\sigma_1^*\ge \sigma_2^*\ge\ldots$ be the non-increasing permutation of
    $(\sigma_i)$.
Then for any $n$-dimensional subspace $L_n\subset H$ the inequality holds
$$
    \E \rho(\mathsf{z},L_n)_H^2 \ge \sum_{k>n}(\sigma_k^*)^2,\quad
\mbox{where $\rho(x,L_n)_H := \inf_{y\in L}\|x-y\|_H$}.
$$
\end{lemma}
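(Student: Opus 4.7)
The plan is to reduce the expected squared error to a linear functional in the ``diagonal entries'' $\|P\varphi_i\|_H^2$ of the orthogonal projector $P$ onto $L_n$, and then to solve the resulting tiny linear program, whose extremum puts all mass on the directions carrying the largest second moments.

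First, writing $\rho(\mathsf{z},L_n)_H^2=\|\mathsf{z}-P\mathsf{z}\|_H^2=\langle\mathsf{z},(I-P)\mathsf{z}\rangle$ and taking expectation, the uncorrelatedness hypothesis $\E\langle\mathsf{z},\varphi_i\rangle\langle\mathsf{z},\varphi_j\rangle=0$ for $i\ne j$ kills all off-diagonal contributions upon expansion in the basis $\{\varphi_i\}$, giving
$$
\E\rho(\mathsf{z},L_n)_H^2=\sum_{i}\sigma_i^2\,(1-\|P\varphi_i\|_H^2).
$$
The integrability $\E\|\mathsf{z}\|_H^2=\sum_i\sigma_i^2<\infty$ together with $0\le\|P\varphi_i\|_H^2\le 1$ makes all these sums absolutely convergent.

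Next I would set $a_i:=\|P\varphi_i\|_H^2\in[0,1]$ and use $\sum_i a_i=\mathrm{tr}\,P=\dim L_n\le n$. Thus the expected error is bounded below by
$$
\inf\{\sum_i\sigma_i^2(1-a_i)\colon 0\le a_i\le 1,\;\sum_i a_i\le n\},
$$
equivalently by $\sum_i\sigma_i^2$ minus the maximum of $\sum_i\sigma_i^2 a_i$ under the same constraints. That maximum is clearly $\sum_{k=1}^n(\sigma_k^*)^2$, attained by putting $a_i=1$ on the $n$ indices realizing the $n$ largest values of $\sigma_i$; hence
$$
\E\rho(\mathsf{z},L_n)_H^2\ge\sum_i\sigma_i^2-\sum_{k=1}^n(\sigma_k^*)^2=\sum_{k>n}(\sigma_k^*)^2,
$$
as required.

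The lemma is a Hilbert-space variant of the Eckart--Young / Allahverdiev theorem on low-rank approximation, so no genuinely hard step appears. The only point of care is the Fubini / trace bookkeeping that turns the first display into a diagonal sum — this is handled by trace-class-ness of the covariance $\Sigma=\E(\mathsf{z}\otimes\mathsf{z})$, with $\mathrm{tr}\,\Sigma=\E\|\mathsf{z}\|_H^2$; the subsequent linear program is then solved by a one-line rearrangement inequality.
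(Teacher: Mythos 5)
Your proof is correct, and it is the standard argument for this lemma (the paper itself does not prove it but only cites Ismagilov): reduce $\E\rho(\mathsf{z},L_n)_H^2$ to $\mathrm{tr}\,\Sigma(I-P)=\sum_i\sigma_i^2(1-\|P\varphi_i\|_H^2)$ using the diagonality of the covariance in $\{\varphi_i\}$, then solve the linear program with $\sum_i\|P\varphi_i\|_H^2=\mathrm{tr}\,P\le n$. All the steps check out, including the trace-class justification for the interchange of sum and expectation and the rearrangement bound for the extremal choice of the $a_i$.
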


In our case one can take the standard basis $\{e_i\}$, because
$\E\widetilde{\mathsf{u}}_i\widetilde{\mathsf{u}}_j=0$;
$\sigma_i^2=\E\widetilde{\mathsf{u}}_i^2=|\Omega_s^\circ|^{-1} \asymp (n\nu_s)^{-1}$,
$i\in\Delta_s$. Denote $\nu := \max \nu_s$, then
$$
\E\|\widetilde{\mathsf{u}}-\widetilde{\mathsf{v}}\|_2^2 \ge
\E\rho(\widetilde{\mathsf{u}},L_n)_2^2 \ge \sum_{k=n+1}^{3n/2}(\sigma_k^*)^2 \gg
\nu^{-1}.
$$

Therefore, for regular $\Omega^\circ$ we have
$$
\E(\xi|\pOmega=\Omega^\circ) \gg \P(\supp \mathsf{u}\subset\Omega^\circ)\cdot
n^{1-q/2}\cdot \nu^{-q/2}.
$$
Averaging over $\Omega^\circ$ we obtain
$$
    \E\xi \gg n^{1-q/2}\nu^{-q/2}\cdot 
    \P(\supp \mathsf{u}\subset\pOmega,\;\pOmega\mbox{ is regular}).
$$
Later we will show that one can separate conditions in $\P(\cdot)$:
\begin{equation}\label{corr}
    \P(\supp \mathsf{u}\subset\pOmega,\;\pOmega\mbox{ is regular})\ge
    \P(\supp \mathsf{u}\subset\pOmega)\P(\pOmega\mbox{ is regular}).
\end{equation}
It is easy to check that the probability that $\pOmega$ is regular is
$\ge 1/2$. To do this we apply Bernstein's inequality:
$$
\P(|\sum_{i=1}^k X_i|>t)\le 2\exp(-\frac{t^2}{2(\sigma^2+Mt/3)}),
$$
where $X_1,\ldots,X_k$ are independent r.v. with zero average,
$M:=\max_i \|X_i\|_\infty$, $\sigma^2 := \sum_i \E X_i^2$. Fix $s$,
put
$X_i=\mathbf{1}\{i\in\pOmega_s\}-p_i$, $i\in\Delta_s$ (recall that
$\pOmega_s:=\pOmega\cap\Delta_s$), use that $M\le 1$, $\sigma^2\le
\sum_{i\in\Delta_s} p_i = 2n\nu_s$ to obtain
$$
\P(||\pOmega_s|-2n\nu_s|>n\nu_s/2)\le 2\exp(-\frac{(n\nu_s/2)^2}{2(2n\nu_s+n\nu_s/6)})\le
2\exp(-cn\nu_s).
$$
The condition 2) of the Theorem implies that this probability is less than $\frac{1}{2m}$.
Hence, the probability of irregularity of $\pOmega$ is less than $1/2$.
So,
$$
\E\xi \gg n^{1-q/2}\nu^{-q/2}\P(\supp \mathsf{u}\subset\pOmega),\quad K \gg
n^{1-q/2}\nu^{-q/2}.
$$

Finally, we compare upper and lower bounds on $K$ and obtain the required
bound for $d_n$.

It remains to prove~(\ref{corr}). We will use one technical lemma.
Here we write $\#\Omega$ instead of $|\Omega|$ for convenience.
\begin{lemma}
    Let $\delta\in(0,1)$, $\pOmega$ is a random subset of $\{1,\ldots,N\}$, and
    each point $i$ gets in $\pOmega$ independently of others with probability $p_i\le
    1/2$. If $\omega := \E\#\pOmega = \sum_{i=1}^N p_i$
    is large enough ($\omega \ge c(\delta)$), then
    $$
    \E(\#\pOmega|\mathcal R)\ge \E\#\pOmega,\quad\mbox{where $\mathcal
    R:=\{(1-\delta)\omega \le \#\pOmega \le 5\omega\}$}.
    $$
\end{lemma}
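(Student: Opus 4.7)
The plan is to set $X:=\#\pOmega$ and split the bad event $\mathcal R^c = A\sqcup B$ into the under-shoot $A:=\{X<(1-\delta)\omega\}$ and the over-shoot $B:=\{X>5\omega\}$. Using $\E X=\omega$, the desired inequality $\E(X\mathbf{1}_{\mathcal R})\ge\omega\P(\mathcal R)$ rearranges to
$$
\E\bigl((\omega-X)\mathbf{1}_A\bigr)\;\ge\;\E\bigl((X-\omega)\mathbf{1}_B\bigr).
$$
Since $\omega-X>\delta\omega$ on $A$, the left-hand side is bounded below by $\delta\omega\,\P(A)$; since $X-\omega\le X$ on $B$, the right-hand side is bounded above by $\E(X\mathbf{1}_B)$. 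So it suffices to prove $\E(X\mathbf{1}_B)\le\delta\omega\,\P(A)$.

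For the upper tail, I would use the product moment generating function $\E e^{tX}=\prod_i(1+p_i(e^t-1))\le\exp(\omega(e^t-1))$ for $t>0$, together with its derivative $\E(Xe^{tX})\le\omega e^t\exp(\omega(e^t-1))$. Markov's inequality at $t=\ln 5$ then yields $\E(X\mathbf{1}_B)\le 5\omega\exp(-\alpha\omega)$ with $\alpha:=5\ln 5 - 4\approx 4.05$.

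For the lower tail, the hypothesis $p_i\le 1/2$ forces $\Var X=\sum p_i(1-p_i)\ge\omega/2$, so $X$ is spread on a scale $\sqrt{\omega}$. A standard Cram\'er / local-CLT estimate for sums of independent Bernoullis gives
$$
\P(A)\;\ge\;c(\delta)\,\omega^{-1/2}\exp(-\delta^2\omega),
$$
and since $\delta<1<\alpha$, for $\omega\ge c'(\delta)$ this dominates $\E(X\mathbf{1}_B)/(\delta\omega)$, finishing the argument.

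The hard part will be producing the lower bound on $\P(A)$ with uniform control over admissible weight profiles $(p_i)$, since $X$ is an arbitrary Poisson-binomial rather than a clean binomial. A cleaner alternative that avoids local-CLT machinery is to use the identity $\E(X\mathbf{1}_{\mathcal R})=\sum_i p_i\,\P(X^{(i)}+1\in\mathcal R)$ with $X^{(i)}:=X-\mathbf{1}_{i\in\pOmega}$; conditioning on each coordinate reduces the target to the pointwise comparison $\P(X^{(i)}=\lceil(1-\delta)\omega\rceil-1)\ge\P(X^{(i)}=\lfloor 5\omega\rfloor)$ for every $i$, which in turn follows from the log-concavity of Poisson-binomial mass functions together with the fact that $(1-\delta)\omega$ lies far closer to the mean of $X^{(i)}$ than $5\omega$ does.
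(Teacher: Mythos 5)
Your reduction is correct and clean: with $X=\#\pOmega$, the target $\E(X\mathbf 1_{\mathcal R})\ge\omega\P(\mathcal R)$ is indeed equivalent to $\E((\omega-X)\mathbf 1_A)\ge\E((X-\omega)\mathbf 1_B)$, and your exponential-moment bound $\E(X\mathbf 1_B)\le 5\omega e^{-(5\ln 5-4)\omega}$ for the overshoot is fine. The genuine gap is exactly where you flag it: the lower bound $\P(A)\ge c(\delta)\omega^{-1/2}e^{-\delta^2\omega}$ is not proved. A ``standard Cram\'er / local-CLT estimate'' is not off-the-shelf here, because you need a lower-tail \emph{lower} bound uniform over all Poisson--binomial laws with the given mean, the deviation $\delta\omega$ is of order $\sqrt{\omega}$ standard deviations (outside the moderate-deviation regime where the Gaussian exponent is exact), and the correct exponent is a relative-entropy rate function rather than $\delta^2$; making this rigorous and uniform in the profile $(p_i)$ is a real piece of work, and your log-concavity alternative is likewise only sketched. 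The fix is much cheaper and is precisely where the hypothesis $p_i\le\frac12$ is meant to be used: since $\{X=0\}\subset A$ (for $\omega>0$),
$$
\P(A)\;\ge\;\P(\pOmega=\varnothing)\;=\;\prod_{i=1}^N(1-p_i)\;\ge\;\prod_{i=1}^N 4^{-p_i}\;=\;4^{-\omega},
$$
and $4^{-\omega}=e^{-\omega\ln 4}$ decays strictly slower than your overshoot bound $e^{-(5\ln5-4)\omega}$, so $\E(X\mathbf 1_B)\le\delta\omega\,\P(A)$ holds for $\omega\ge c(\delta)$ and your argument closes.

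For comparison, the paper's proof does the bookkeeping slightly differently: it writes $\E X$ as a mixture of $\E(X\mid\mathcal R)$ and $\E(X\mid\Cc)$ with $\Cc=\mathcal R^c$, shows $\E(X\mid\mathcal R)>(1-\delta/2)\omega$ via Bernstein concentration of $X$ around $\omega$, and bounds $\E(X\mid\Cc)$ by $(1-\delta)\omega+\frac{\delta}{2}\omega$ using exactly the device above, $\P(\Cc)\ge\P(\pOmega=\varnothing)\ge 4^{-\omega}$, against a Bernstein upper tail. So the two arguments share the same two ingredients (an exponential upper-tail bound and the $4^{-\omega}$ atom at zero); your version, once patched with the trivial bound on $\P(A)$, is if anything a little more direct since it never needs the concentration step inside $\mathcal R$.
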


Let us derive~(\ref{corr}) from lemma. It is enough to prove the inequality for each
block (because of the independence):
$$
\P(\supp\mathsf{u}\cap\Delta_s \subset \pOmega\cap\Delta_s \big| 3n\nu_s/2 \le
|\pOmega_s|\le 5n\nu_s) \ge \P(\supp\mathsf{u}\cap\Delta_s \subset
\pOmega\cap\Delta_s).
$$
The right side of the inequality is $N_s^{-1}\E|\pOmega\cap\Delta_s|$, and left
side is equal to the corresponding conditional expectation. It remains to apply the Lemma for
$\pOmega\cap\Delta_s$.

Let us prove this lemma. The probability space is a union of non-intersecting
events $\mathcal R$, $\Cc_- :=\{\#\pOmega|<(1-\delta)\omega\}$ and
$\Cc_+ :=\{\#\pOmega>5\omega\}$. 
Denote $\Cc=\Cc_-\cup \Cc_+$. We have to prove that
$\E(\#\pOmega|\mathcal R)\ge \E(\#\pOmega|\Cc)$.
Using Bernstein's inequality we get
$$
\P(\#\pOmega \in [(1-\delta/3)\omega, (1+\delta/3)\omega])\ge
1-2\exp(-c_\delta\omega)>9/10 
$$
for large $\omega$. This is also true conditioning on $\mathcal R$, so
$$
\E(\#\pOmega|\mathcal R)\ge \frac{1}{10}(1-\delta)\omega +
\frac9{10}(1-\delta/3)\omega > (1-\delta/2)\omega.
$$

Let us bound $\E(\#\pOmega|\Cc) = \E(\#\pOmega|\Cc_-)\P(\Cc_-|\Cc) +
\E(\#\pOmega|\Cc_+)\P(\Cc_+|\Cc)$.

The first term is not more than $\E(\#\pOmega|\Cc_-)\le (1-\delta)\omega$.
The second term is equal to
$$
\E(\#\pOmega|\Cc_+)\P(\Cc_+|\Cc) = 
\frac{\E\#\pOmega\mathbf{1}_{\Cc_+}}{\P(\Cc)} = \frac{1}{\P(\Cc)}
\sum_{k>5\omega}k\P(\#\pOmega=k).
$$
Note that $\P(\Cc)\ge \P(\pOmega=\varnothing)=\prod(1-p_i)\ge \prod 4^{-p_i} =
4^{-\omega}$ (we use that $p_i\le 1/2$), hence the second term is less or equal than
$$
4^{\omega} \sum_{k>5\omega}k\P(\#\pOmega=k)\le
4^\omega\sum_{k>5\omega}k\exp(-c k)\le C\omega e^{-\omega} < \frac{\delta}2\omega
$$
for large $\omega$.
Summing up, we obtain $\E(\#\pOmega|\mathcal C)\le(1-\delta/2)\omega<\E(\#\pOmega|\mathcal R)$.
The lemma and the Theorem are proven.
\end{proof}

Before we proceed to the proof of Theorem 1, we will formulate some useful
statements.

It is well known that Besov spaces are isomorphic to some sequence spaces
(Lizorkin, Triebel, Meyer). Let us describe this isomorphism in the case of
spaces on $\R$. Let
$\psi$, $\varphi$ be a wavelet function and a scaling function, correspondingly.
We will make use of regular Daubechies wavelets with compact support
(see.~\cite{Triebel}, \S1.2): $\psi,\varphi\in
C^{r_0}(\mathbb R)$ and $\int \psi(x)x^j\,dx=0$, $0\le j<r_0$; with some fixed
$r_0\in\mathbb N$.
        
Denote $\psi_{k,j}(x):=2^{k/2}\psi(2^kx-j)$,
$\varphi_{0,j}(x):=\varphi(x-j)$, $k,j\in\mathbb Z$.

    We consider the space $\ell^\sigma_{p,\theta}$ of sequences 
    $(\lambda_{k,j})_{k\ge 0,j\in\mathbb Z}$ with the norm
$$
    \|(\lambda_{k,j})_{k\ge0,j\in\mathbb Z}\|_{\ell^\sigma_{p,\theta}} =
    \begin{cases}
        \left(\sum_{k\ge 0} 2^{k\sigma\theta}\|(\lambda_{k,j})_{j\in\mathbb
        Z}\|_{\ell_p}^\theta\right)^{1/\theta},&\quad \theta<\infty,\\
        \sup_{k\ge 0} 2^{k\sigma}\|(\lambda_{k,j})_{j\in\mathbb Z}\|_{\ell_p},&\quad
        \theta=\infty.
    \end{cases}
$$

    \begin{theoremA}[See.~\cite{Triebel}, \S1.2, Theorem 1.20]
    Suppose $0<r<r_0$; $p,\theta\in[1,\infty]$. Then $f\in L_p(\R)$ lies in
    $\mathcal{B}^r_{p,\theta}(\R)$ if and only if the following norm is finite:
    \begin{equation}\label{besov_discr}
    \|f\| := \|(\langle f,\varphi_{0,j}\rangle)_{j\in\mathbb Z}\|_{\ell_p} +
    \|(\langle f,\psi_{k,j}\rangle)_{k\ge 0,j\in\mathbb
        Z}\|_{\ell^\sigma_{p,\theta}},\;\sigma:=r+\frac12-\frac{1}{p}.
    \end{equation}
    This norm is equivalent to the norm of $\mathcal{B}^r_{p,\theta}(\R)$.
\end{theoremA}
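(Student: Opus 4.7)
The plan is to pass through the Littlewood--Paley (LP) characterization of Besov spaces and exploit that Daubechies wavelets realize a discrete LP decomposition. Recall that for a smooth dyadic frequency partition of unity $\mathrm{id}=S_0+\sum_{k\ge 1}\Delta_k$ one has the equivalence
$$
\|f\|_{\mathcal B^r_{p,\theta}(\R)}\asymp \|S_0 f\|_p+\bigl\|\bigl(2^{kr}\|\Delta_k f\|_p\bigr)_{k\ge 0}\bigr\|_{\ell_\theta}
$$
for all $r>0$ and $p,\theta\in[1,\infty]$. I would take this as the bridge between the modulus-of-smoothness definition of $\mathcal B^r_{p,\theta}$ and the wavelet norm~(\ref{besov_discr}).

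Next I would match the wavelet projection $Q_k f:=\sum_{j\in\mathbb Z}\langle f,\psi_{k,j}\rangle\psi_{k,j}$ with the LP block $\Delta_k f$. The kernel of $Q_k$ is essentially dyadically frequency-localized, since $\psi$ has $r_0$ vanishing moments (killing polynomial content, hence damping low frequencies) and $C^{r_0}$-regularity (giving rapid decay at high frequencies). A Schur-type (or Calder\'on reproducing) argument then yields the almost-diagonal bounds
$$
\|Q_k f\|_p\ll\sum_{|k-\ell|\le C}\|\Delta_\ell f\|_p,\qquad \|\Delta_k f\|_p\ll\sum_{|k-\ell|\le C}\|Q_\ell f\|_p,
$$
uniformly in $p\in[1,\infty]$. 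Weighting by $2^{kr}$ and summing in $\ell_\theta$ absorbs the finite shift and gives $\|(2^{kr}\|\Delta_k f\|_p)_k\|_{\ell_\theta}\asymp \|(2^{kr}\|Q_k f\|_p)_k\|_{\ell_\theta}$; the low-frequency remainder is handled by $\varphi_{0,j}$ analogously.

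Finally I would convert the $L_p$-norm of each wavelet block into an $\ell_p$-norm of coefficients. Since $\psi$ has compact support, say in $[-M,M]$, at every point $x$ at most $2M+1$ of the functions $\psi_{k,j}$ are nonzero. Splitting $\{\psi_{k,j}\}_j$ into $2M+1$ subfamilies with pairwise disjoint supports and using $\|\psi_{k,j}\|_p=2^{k(1/2-1/p)}\|\psi\|_p$ gives, for arbitrary coefficients $c_{k,j}$,
$$
\Bigl\|\sum_j c_{k,j}\psi_{k,j}\Bigr\|_p\asymp 2^{k(1/2-1/p)}\Bigl(\sum_j|c_{k,j}|^p\Bigr)^{1/p}.
$$
Applied to $c_{k,j}=\langle f,\psi_{k,j}\rangle$ and combined with the previous step, the factor $2^{kr}\cdot 2^{k(1/2-1/p)}=2^{k\sigma}$ appears and the wavelet norm~(\ref{besov_discr}) emerges, proving the ``only if'' direction. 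The ``if'' direction is read right to left: the wavelet series $f=S_0 f+\sum_k Q_k f$ converges in $L_p$ under the assumption that~(\ref{besov_discr}) is finite, and the same chain of equivalences bounds $\|f\|_{\mathcal B^r_{p,\theta}}$.

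The main obstacle is the uniform-in-$p$ almost-diagonal estimate relating $Q_k$ and $\Delta_k$ at the endpoints $p=1,\infty$, where neither Plancherel nor Mikhlin-type multiplier theorems apply directly; one must argue pointwise via the rapid off-diagonal decay of the wavelet reproducing kernel, typically through a Peetre maximal function. The hypothesis $r<r_0$ is used precisely here to guarantee $\sigma<r_0-1/p$ so that both the vanishing-moments and the smoothness of $\psi$ provide enough room for the required decay.
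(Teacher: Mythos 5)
The paper does not prove this statement: Theorem A is quoted verbatim from Triebel's book (\S1.2, Theorem 1.20 of the cited reference) and used as a black box, so there is no in-paper argument to compare yours against. Your sketch follows the standard route from the literature (Littlewood--Paley characterization, comparison of wavelet projections with frequency blocks, then level-wise $L_p$--$\ell_p$ equivalence), which is essentially how the cited result is established, so the overall architecture is sound.

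There is, however, one concretely wrong step as written. For compactly supported Daubechies wavelets the Fourier transform of $\psi$ is an entire function, not compactly supported, so the displayed almost-diagonal bounds $\|Q_k f\|_p\ll\sum_{|k-\ell|\le C}\|\Delta_\ell f\|_p$ (and its converse) are false: the interaction between $Q_k$ and $\Delta_\ell$ does not vanish for $|k-\ell|>C$. The correct statement has geometrically decaying off-diagonal terms, $\|Q_k f\|_p\ll\sum_{\ell}2^{-\delta|k-\ell|}\|\Delta_\ell f\|_p$, where $\delta$ is governed by the number of vanishing moments (for $\ell<k$) and the $C^{r_0}$-smoothness (for $\ell>k$); one then needs $\delta>r$ to sum this against the weight $2^{kr}$ in $\ell_\theta$. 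Your closing paragraph correctly identifies that $r<r_0$ is what provides this room, but that observation contradicts the finitely banded estimates you display --- if those held, no restriction on $r$ would be needed at all. A second, minor point: the finite-overlap/disjoint-support argument only yields the upper bound in $\bigl\|\sum_j c_{k,j}\psi_{k,j}\bigr\|_p\asymp 2^{k(1/2-1/p)}\|(c_{k,j})_j\|_{\ell_p}$; the lower bound requires orthonormality (pair against $\psi_{k,j}$ and use H\"older), as in the inequality the paper itself invokes from Wojtaszczyk, Proposition 8.3.
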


    See also~\cite{HS}, Th.1.13.

    We will use later that Besov classes grow as $\theta$ increases
    ($B^1_{1,\theta_1}$ is embedded in $B^1_{1,\theta_2}$ as $\theta_1<\theta_2$), it is easily
    seen from this theorem.

    We should discretize the norm, too.
    Consider the subspace in $L_p$:
    $$
    L_p\langle\Psi_+\rangle := \overline{\mathrm{span}}\,\Psi_+,\quad
    \Psi_+:=\{\psi_{k,j}\colon k\ge 0,\,j\in\mathbb Z\}.
    $$
    (the closure is in $L_p(\R)$). 
    \begin{lemma}
        Let $f\in L_p\langle\Psi_+\rangle$. Then for $2\le p<\infty$ we have
        \begin{equation}
            \label{lq_discr}
        \|(\langle f,\psi_{k,j}\rangle)_{k\ge0,j\in\mathbb Z}\|_{\ell^{1/2-1/p}_{p,p}} \ll
        \|f\|_p \ll \|(\langle f,\psi_{k,j}\rangle)_{k\ge0,j\in\mathbb
        Z}\|_{\ell^{1/2-1/p}_{p,2}}.
        \end{equation}
    \end{lemma}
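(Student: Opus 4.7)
The plan is to combine two standard wavelet facts: the \emph{Littlewood--Paley square function characterization} of $L_p$, and a \emph{single-scale norm equivalence} for wavelet expansions. Set $\lambda_{k,j}:=\langle f,\psi_{k,j}\rangle$ and $P_k f:=\sum_{j\in\mathbb Z}\lambda_{k,j}\psi_{k,j}$, so that $f=\sum_{k\ge 0}P_k f$ in $L_p\langle\Psi_+\rangle$ for $1<p<\infty$. Since a sufficiently regular Daubechies system is an unconditional basis of $L_p(\mathbb R)$ (see Meyer; Triebel, \S1.2), we have the Littlewood--Paley equivalence
$$
\|f\|_p \asymp \left\|\Bigl(\sum_{k\ge 0}|P_k f|^2\Bigr)^{1/2}\right\|_p.
$$
At a fixed level $k$, the functions $\psi_{k,j}$ have compactly supported profiles with bounded overlap and $\|\psi_{k,j}\|_p=2^{k(1/2-1/p)}\|\psi\|_p$, and unconditionality at a single scale yields
$$
\|P_k f\|_p^p \asymp 2^{k\sigma p}\sum_{j\in\mathbb Z}|\lambda_{k,j}|^p,\qquad \sigma:=\tfrac12-\tfrac1p.
$$

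For the right inequality $\|f\|_p\ll\|(\lambda_{k,j})\|_{\ell^\sigma_{p,2}}$ I would apply the triangle inequality in $L_{p/2}$ (legal since $p\ge 2$) to the functions $|P_k f|^2$:
$$
\left\|\Bigl(\sum_{k\ge0}|P_k f|^2\Bigr)^{1/2}\right\|_p = \left\|\sum_{k\ge0} |P_k f|^2\right\|_{p/2}^{1/2}\le \Bigl(\sum_{k\ge0} \|P_k f\|_p^2\Bigr)^{1/2},
$$
and then substitute the single-scale equivalence to recognize $\|(\lambda_{k,j})\|_{\ell^\sigma_{p,2}}^2$ on the right.

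For the left inequality $\|(\lambda_{k,j})\|_{\ell^\sigma_{p,p}}\ll\|f\|_p$ I would use the pointwise monotonicity $\|(a_k)\|_{\ell^p}\le\|(a_k)\|_{\ell^2}$ (valid for $p\ge 2$), applied with $a_k=|P_k f(x)|$:
$$
\Bigl(\sum_{k\ge0}\|P_k f\|_p^p\Bigr)^{1/p}=\left\|\Bigl(\sum_{k\ge0}|P_k f|^p\Bigr)^{1/p}\right\|_p\le\left\|\Bigl(\sum_{k\ge0}|P_k f|^2\Bigr)^{1/2}\right\|_p\asymp\|f\|_p.
$$
Substituting the single-scale equivalence on the left recognizes $\|(\lambda_{k,j})\|_{\ell^\sigma_{p,p}}$.

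The only delicate point is quoting the two wavelet ingredients cleanly: both ultimately reduce to the fact that a regular Daubechies system is an unconditional basis of $L_p(\mathbb R)$ for $1<p<\infty$, from which the square-function equivalence and the single-scale estimate (via bounded support overlap on each dyadic level) follow in a well-established way. The rest of the argument is purely the monotonicity of $\ell^p$ norms in the exponent and Minkowski's inequality in $L_{p/2}$, so no new ideas beyond Theorem A's setting are needed.
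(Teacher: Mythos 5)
Your proof is correct and follows essentially the same route as the paper: both rest on the unconditionality/square-function characterization of $L_p$ together with the single-scale equivalence $\|\sum_j\lambda_{k,j}\psi_{k,j}\|_p\asymp 2^{k(1/2-1/p)}\|(\lambda_{k,j})_j\|_{\ell_p}$, and the upper bound via Minkowski in $L_{p/2}$ is identical. The only (harmless) difference is in the lower bound, where you apply the pointwise embedding $\ell^2\hookrightarrow\ell^p$ to the block square function $(\sum_k|P_kf|^2)^{1/2}$, while the paper applies the superadditivity of $t\mapsto t^{p/2}$ to the coefficientwise square function $(\sum_{k,j}\lambda_{k,j}^2\psi_{k,j}^2)^{1/2}$ and uses $\|\psi_{k,j}\|_p\asymp 2^{k(1/2-1/p)}$ directly.
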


    We provide the proof for completeness; it is possible that such bounds are
    known. Similar inequalities had appeared in the context of Kolmogorov widths
    in, e.g.,~\cite{Kul} for Haar wavelets.

    \begin{proof}
    We will make use of the following facts about wavelets.
    \begin{itemize}
        \item[(i)] Wavelets $\{\psi_{k,j}\}$ are an unconditional basis
            in $L_p(\mathbb R)$, see.~\cite{Wojt}, \S8.2;
            hence, using~\cite{Wojt}, \S7.3, Corrolary 7.11, we have
            $$
                \|f\|_p \asymp \|(\sum_{k,j}\langle f,\psi_{k,j}\rangle^2 |\psi_{k,j}|^2)^{1/2}\|_p.
            $$
        \item[(ii)] For fixed $k$, inequality~\cite{Wojt}, \S8.1,
            Proposition 8.3 gives
            $$
            \|\sum_{j\in\mathbb Z} \langle f,\psi_{k,j}\rangle\psi_{k,j}\|_p \asymp
            2^{k(1/2-1/p)}\|(\langle f,\psi_{k,j}\rangle)_j\|_{\ell_p}.
            $$
    \end{itemize}

        Use (i), the inequality $(a+b)^r \ge a^r+b^r$ for $r\ge 1$,
        $a,b\ge 0$, and the relation $\|\psi_{k,j}\|_p\asymp
        2^{k(1/2-1/p)}$:
        \begin{multline*}
            \|f\|_p^p \asymp \int |\sum_{k\ge 0,j\in\mathbb Z} \langle
            f,\psi_{k,j}\rangle^2 \psi_{k,j}(t)^2|^{p/2}\,dt \ge
            \sum_{k\ge 0,j\in\mathbb Z}\|\langle
            f,\psi_{k,j}\rangle\psi_{k,j}\|_p^p \asymp \\
            \asymp \sum_{k\ge 0} 2^{kp(1/2-1/p)}\|(\langle f,\psi_{k,j}\rangle)_j\|_{\ell_p}^p.
        \end{multline*}
        It gives us the required lower bound.

    For the upper bound we use (i), put $g_k=(\sum_{j\in\mathbb Z} \langle
        f,\psi_{k,j}\rangle^2 \psi_{k,j}^2)^{1/2}$ and obtain
        $$
            \|f\|_p^2 \asymp \|(\sum_{k\ge 0} g_k^2)^{1/2}\|_p^2 = \|\sum_{k\ge
            0} g_k^2\|_{p/2} \le \sum_{k\ge0} \|g_k^2\|_{p/2} = \sum_{k\ge 0}\|g_k\|_p^2.
        $$
Now, using (i) and then (ii),
\begin{multline*}
    \|g_k\|_p = \|(\sum_{j\in\mathbb Z} \langle f,\psi_{k,j}\rangle^2\psi_{k,j}^2)^{1/2}\|_p \asymp \\
    \asymp \|\sum_{j\in\mathbb Z} \langle f,\psi_{k,j}\rangle\psi_{k,j}\|_p  \asymp
    2^{k(1/2-1/p)}\|(\langle f,\psi_{k,j}\rangle)_j\|_{\ell_p}.
\end{multline*}
Therefore,
$$
\|f\|_p^2 \ll \sum_{k\ge 0} 2^{2k(1/2-1/p)}\|(\langle
        f,\psi_{k,j}\rangle)_j\|_{\ell_p}^2,
$$
as required. Lemma is proved.
    \end{proof}

\begin{proof}[Proof of Theorem~\ref{th_besov}] We will use Theorem A and Lemma 3
    to discretize our problem and then we will bound widths using Theorem 2.
    
    \textbf{Discretization.}
    It is convenient to change coordinates (to get octahedra):
$$
x_{k,j} := 2^{k/2}\langle f,\psi_{k,j}\rangle.
$$
Moreover, as we work with function classes on the segment, we may restrict indices to
the set $T:=\{(k,j)\colon k\ge 0,\,j=1,\ldots,2^k\}$.
    Consider the subspace $L_q\langle\Psi_T\rangle$, spanned by
    $\Psi_T:=\{\psi_{k,j}\colon (k,j)\in T\}$.
    The support of functions from $\Psi_T$ is contained in some segment
    $[-a,a]$; this is also true for $L_q\langle\Psi_T\rangle$.
    The natural projector
    $L_q(\mathbb R)\to L_q\langle\Psi_T\rangle$ is bounded (because wavelets
    form a basis), hence
    \begin{equation}
        \label{discrlow_segment}
    d_n(B^1_{1,\theta}[-a,a],L_q[-a,a])\gg d_n(B^1_{1,\theta}(\R)\cap
    L_q\langle\Psi_T\rangle,L_q\langle\Psi_T\rangle).
    \end{equation}

    \begin{remark}
        One could use periodic wavelets in $\mathcal B^r_{p,\theta}(\mathbb
    R/\mathbb Z)$ instead, see~\cite{Triebel}, \S1.3, Theorem 1.37; however,
        this also brings technical complications.
    The most convenient for us would be Haar wavelets, but they do not form an
        unconditional basis for $\mathcal{B}^1_{1,\theta}$, see~\cite{Ullrich}.
    \end{remark}

We denote by $\ell^\sigma_{p,\theta}(T)$ the analog of the space
$\ell^\sigma_{p,\theta}$ for vectors $x\in\R^T$:
$$
\|x\|_{\ell^\sigma_{p,\theta}(T)} := 
    \begin{cases}
        \left\{\sum_{k\ge 0} 2^{k\sigma\theta}\|(x_{k,j})_{1\le j\le 2^k}\|_{\ell_p}^\theta\right\}^{1/\theta},&\quad \theta<\infty,\\
        \sup_{k\ge 0} 2^{k\sigma}\|(x_{k,j})_{1\le j\le
        2^k}\|_{\ell_p},&\quad \theta=\infty,
    \end{cases}
$$
$b^\sigma_{p,\theta}(T)$ is the unit ball of this space.

Theorem A gives that for $f\in L_q\langle\Psi_T\rangle$ we have
\begin{equation}
    \label{discr_set}
    \|f\|_{\mathcal{B}^1_{1,\theta}} \asymp
    \|\langle f,\psi_{k,j}\rangle\|_{\ell^{1/2}_{1,\theta}(T)} = \|x\|_{\ell^0_{1,\theta}(T)},\quad
    x_{k,j}=2^{k/2}\langle f,\psi_{k,j}\rangle.
\end{equation}
    Indeed, the first term in~(\ref{besov_discr}) is zero as $\psi_{k,j}\perp \varphi_{0,i}$, $k\ge0$
    (this follows from the definition of wavelets and scaling functions).
    So, we have proved that the set $b^0_{1,\theta}(T)$ is the discretization of
    $B^1_{1,\theta}\cap L_q\langle\Psi_T\rangle$.
    The set $b^0_{1,\theta}(T)$ is the infinite-dimensional octahedron for 
$\theta=1$ and the cartesian product of octahedra of dimensions
$1,2,\ldots,2^k,\ldots$ for $\theta=\infty$.

Let us write the lower bound from~(\ref{lq_discr}) for $p=q$, $f\in
    L_q\langle\Psi_T\rangle$, in the coordinates $x_{k,j}$:
\begin{equation}
    \label{discrlow_norm}
    \|f\|_q^q \gg \sum_{k=0}^\infty 2^{-k}\sum_{j=1}^{2^k} |x_{k,j}|^q =
    \|x\|^q_{\ell^{-1/q}_{q,q}(T)}.
\end{equation}

It is important to note that as $q>2$, we have $\|f\|_q\ge \|f\|_2$ and hence $\|f\|_q\ge
\|x\|_{\ell^{-1/2}_{2,2}(T)}$.
Using~(\ref{discrlow_segment}), (\ref{discrlow_norm}) and~(\ref{discr_set}),
we obtain the ``lower'' discretization:
\begin{equation}
    \label{discrlow}
    d_n(B^1_{1,\theta}[-a,a],L_q[-a,a]) \gg
    d_n(b^0_{1,\theta}(T),\ell^{-1/q}_{q,q}(T)\cap\ell^{-1/2}_{2,2}(T)).
\end{equation}
(The change of segment from $[0,1]$ to $[-a,a]$ makes no difference.)

The ``upper'' discretization is analogous.
Pick $f\in  B^1_{1,\theta}[0,1]$. We can extend $f$ from the segment to the
whole line and control it's norm and support.
To do this, one can extend $f$ to
$\R$ (see the Remark after Besov space definition) and then multiply it by some
``cut function'', see~\cite{Besov}, Lemma 3.
We obtain $f\colon\R\to\R$ with support $[-b,b]$. Only finite number of
functions  $\varphi_{0,j}$ have support intersecting $[-b,b]$.
We subtract from $f$ the linear combination of such functions (it is an element
of $n_0$-dimensional space for some fixed $n_0$) and get a function with
support in larger segment $[-B,B]$, but orthogonal to $\psi_{k,j}$, $k<0$
(orthogonality follows from the wavelet definition):
$$
\widetilde{f} := f-\sum \langle\varphi_{0,j},f\rangle \varphi_{0,j}\in
L_q\langle\Psi_+\rangle,\;
\supp\widetilde{f}\subset[-B,B],\;
\|\widetilde f\|_{\mathcal B^1_{1,\theta}(\R)}\ll \|f\|_{\mathcal B^1_{1,\theta}[0,1]}.
$$
Denote
$$
T':=\{(k,j)\colon k\ge0,\,\supp\psi_{k,j}\cap[-B,B]\ne\varnothing\},\quad
\Psi_{T'}:=\{\psi_{k,j}\colon (k,j)\in T'\},
$$
then $\widetilde{f}\in L_q\langle\Psi_{T'}\rangle$.
Therefore
$$
    d_{n+n_0}(B^1_{1,\theta}[0,1],L_q[0,1])\ll d_n(B^1_{1,\theta}(\R)\cap
    L_q\langle\Psi_{T'}\rangle,L_q).
$$
The inequality~(\ref{lq_discr}) in the coordinates $x_{k,j}$ becomes
$$
    \|f\|_q^2 \ll \sum_{k\ge
    0}\left(2^{-k}\sum_{j=1}^{2^k}|x_{k,j}|^q\right)^{2/q} =
    \|x\|^2_{\ell^{-1/q}_{q,2}(T)},
$$
and, finally,
\begin{equation}
    \label{discrup}
    d_{n+n_0}(B^1_{1,\theta}[0,1],L_q[0,1])\ll 
    d_n(b^0_{1,\theta}(T'), \ell^{-1/q}_{q,2}(T'))
\asymp d_n(b^0_{1,\theta}(T), \ell^{-1/q}_{q,2}(T)).
\end{equation}

\textbf{Lower bound.}
We will bound~(\ref{discrlow}), using theorem 2 in the form~(\ref{th_w_form}).
Obviously, $\ell^{-1/q}_{q,q}(T)$ and
$\ell^{-1/2}_{2,2}(T)$ are weighted norms with weights $w_{k,i}=2^{-k}$.

Let us start with the case $\theta=1$; the set $b^0_{1,1}(T)$ is the octahedron.
We denote by $k_0$ and $k_1$ the minimal and maximal $k$, correspondingly, that
satisfy the condition $4n < 2^k < n^{\tilde{q}/2}$, with some fixed
$\tilde{q}\in(2,q)$, e.g. $\tilde{q}=1+q/2$. We apply Theorem 2
and~(\ref{th_w_form}) in the following situation:
\begin{itemize}
    \item from $x\in\R^T$ we take $N=\sum_{k=k_0}^{k_1} 2^k$ coordinates
        $x_{k,i}$, $k_0\le k\le k_1$, $i=1,\ldots,2^k$; the set $b^0_{1,1}(T)$
        becomes $B_1^N$;
    \item weights $w_{k,i}=2^{-k}$, $k_0\le k\le k_1$, $i=1,\ldots,2^k$;
    \item $m=1$ (one block);
    \item $q_1=2$, $q_2=q$.
\end{itemize}

Let us check that the conditions of Theorem~\ref{th_w} are met. Condition 1):
$$
\max w_{k,i}=2^{-k_0} < (4n)^{-1},\quad
\sum w_{k,i} = (k_1-k_0+1)\asymp
\log n.
$$
Condition 2) is satisfied for large enough $n$. Hence from~(\ref{th_w_form}) we have the bound
$$
d_n \gg \min(n^{-1/2}(\log n)^{1/2}, 2^{-k_1/q})\asymp
n^{-1/2}\log^{1/2}n,
$$
as $2^{-k_1/q}>n^{-\frac12\tilde{q}/q}$.

The case $\theta\in(1,2]$ follows from $\theta=1$ as our class grows when
$\theta$ increases.

In the case $\theta=\infty$ we apply Theorem 2 for the same coordinates and
weights, but now the set
$b^0_{1,\infty}(T)$ restricted to $N$ coordinates is the product of octahedra
$\prod_{k=k_0}^{k_1} B_1^{2^k}$ and there are $m\asymp\log n$ blocks $\Delta_s=\{w_{k,i}\colon k=s\}$. 
Arguing as before, we obtain
$$
d_n\gg \min(\sqrt{\frac{\log n}{n}}(\log n)^{1/2}, 2^{-k_1/q})\asymp n^{-1/2}\log n.
$$
We have proven slightly more: if we cut $T$ by $m\asymp\log n$ levels, i.e.,
replace it with
$$
T_m:=\{(k,j)\colon 0\le k< m,\;j=1,\ldots,2^k\},
$$
then the bound is valid:
\begin{equation}
    \label{discrlow_infty}
d_n(b^0_{1,\infty}(T_m),\ell^{-1/q}_{q,q}(T_m)\cap\ell^{-1/2}_{2,2}(T_m))\gg
n^{-1/2}\log n,\quad m\asymp\log n.
\end{equation}

Finally, $\theta\in(2,\infty)$. Use~(\ref{discrlow}) with $T$
replaced by $T_m$, $m\asymp\log n$. Embed $b^0_{1,\infty}(T_m)\subset
m^{1/\theta}b^0_{1,\theta}(T_m)$ and use~(\ref{discrlow_infty}) to get the required
bound.

\textbf{Upper bound.} First we prove that in~(\ref{discrup}) it is enough to
consider $m\asymp \log n$ blocks. Denote $x[k]:=(x_{k,j})_{j=1}^{2^k}$. If $x\in b^0_{1,\theta}(T)$,
then $\|x[k]\|_1\le 1$ for all $k$; hence, $\|x[k]\|_q\le 1$,
$$
\sum_{k\ge m} (2^{-k}\|x[k]\|_q^q)^{2/q} \ll 2^{-2m/q}.
$$
For $m\ge C\log n$ this ``tail'' is less than $n^{-C_1}$; it has no
effect on the width for large $C_1$. So, it is enought to bound
$d_n(b^0_{1,\theta}(T_m),\ell^{-1/q}_{q,2}(T_m))$, $m\asymp \log n$.

We start with the case $\theta=2$. The set of the coordinates is split to blocks
$\Delta_k=\{(k,j)\}_{j=1}^{2^k}$. To approximate in blocks of size $2^k<n/4$ we
take the whole space $\R^{\Delta_k}$, it requires about a half of the dimension
($\le n/2$).
In other blocks we take the subspaces of dimension 
$l\approx n/(2m)$ which are extremal for the widths
$d_{l}(B_1^{2^k},\ell_q^{2^k})\asymp l^{-1/2}2^{k/q}$, as
in~(\ref{octahedra}). We obtain the approximation
$$
\|x[k]-y[k]\|_q\ll (n/m)^{-1/2}2^{k/q}\|x[k]\|_1,
$$
so
$$
(2^{-k}\sum_{j=1}^{2^k}|x_{k,j}-y_{k,j}|^q)^{2/q} \ll (n/m)^{-1}\|x[k]\|_1^2.
$$
Let us sum over $k$:
$$
\|x-y\|_{\ell^{-1/q}_{q,2}(T_m)}^2 \ll (n/m)^{-1} \sum_{k=0}^{m-1} \|x[k]\|_1^2 \le (n/m)^{-1},
$$
as $x\in b^0_{1,2}(T_m)$. So, the error of the approximation is bounded by
$Cn^{-1/2}\log^{1/2}n$.

The case $\theta\in[1,2)$ follows from $\theta=2$ as the class grows as
$\theta$ increases.

If $\theta\ge 2$, we use that $b^0_{1,\theta}(T_m)\subset
m^{1/2-1/\theta}b^0_{1,2}(T_m)$:
$$
d_n(b^0_{1,\theta}(T_m),\ell^{-1/q}_{q,2}(T_m)) \le
m^{1/2-1/\theta}d_n(b^0_{1,2}(T_m),\ell^{-1/q}_{q,2}(T_m))
\ll n^{-1/2}\log^{1-1/\theta}n.
$$

\end{proof}

\section{Future work}

It would be interesting to research other classes related to
$W^1_1$, e.g., Lizorkin--Triebel classes $F^1_{1,\theta}$.

It is possible that one can apply Theorem 2 for widths of classes of functions
of several variables. 

Author thanks reviewer for useful commens on Theorem 2 and A.I.~Tyulenev for
pointing out paper~\cite{Besov}.

\end{document}